\newtheorem{theorem}{Theorem}
\newtheorem{lemma}[theorem]{Lemma}
\newtheorem{proposition}[theorem]{Proposition}%
\newtheorem{corollary}[theorem]{Corollary}%
\newtheorem{example}[theorem]{Example}%
\newtheorem{problem}[theorem]{Problem}%
\theoremstyle{definition}
\newtheorem{definition}[theorem]{Definition}%
\newtheorem{remark}[theorem]{Remark}
\newcommand\fkCan[1]{F_L {\langle #1\rangle} }
\newcommand{\ff}[1]{\widehat{#1}}
\newcommand{\PDL}{\ensuremath{\mathsf{PDL}}\xspace}
\def\Fm{\operatorname{Fm}}
\def\PV{\operatorname{PV}}
\def\At{\operatorname{A}}
\def\Al{{\At}}
\def\AlB{{\operatorname{B}}}
\def\Prog{\operatorname{Prog}}
\def\PDL{{\bf PDL}}
\def\CPDL{{\bf CPDL}}
\newcommand\temp[2]{{#1}^{\mathrm{C}}_{#2}}
\newcommand\trans[2]{{#1}^{+}_{#2}}
\def\val{\vartheta}
\newcommand{\Sub}{\mathop{\mathsf{Sub}}}
\newcommand\comm[1]\empty
\def\emp{\varnothing}
\newcommand\DiA[1]{\langle #1\rangle}
\def\vf{\varphi}
\newcommand\clP[1]{\mathcal{P}(#1)}
\def\AA{\forall}
\newcommand{\clF}{\mathcal{F}}
\def\clM{\mathcal{M}}
\newcommand{\Log}{\mathop{Log}}
\def\mo{\vDash}
\def\imp{\rightarrow}
\def\wK4{\logicts{wK4}}
\def\vL{L}
\def\Alg{\mathop{Alg}}
\def\WS5{\logicts{WS5}}
\newcommand\logicts[1]{{\textsc{#1}}}
\newcommand{\LK}[1]{\logicts{K#1}}
\newcommand{\LS}[1]{\logicts{S#1}}
\def\vf{\varphi}
\def\emp{\varnothing}
\newcommand\hide[1]{\empty}
\def\ToDo{\todo}
\newcommand\ISH[1]{{~\bf IS}: {\color{teal} #1}}
\renewcommand\ISH[1]\empty
\newcommand\DR[1]{{~\bf DR}: {\color{blue}#1}}
\renewcommand\DR[1]\empty
\newcommand\obsolete[1]{{\color{darkgray}\noindent{\bf Obsolete:} #1}}
\renewcommand\obsolete[1]\empty
\newcommand\later[1]{{\color{lightgray}\noindent{\bf later:} {#1} }}
\renewcommand\later[1]\empty
\author{
  Daniel Rogozin \\
  University College London \\
  \texttt{d.rogozin@ucl.ac.uk}
  \and
  Ilya Shapirovsky \\
  New Mexico State University \\
  \texttt{ilshapir@nmsu.edu}
}
\date{}
\title{On decidable extensions of Propositional Dynamic Logic with Converse}
\begin{document}

\maketitle



\begin{abstract}
We describe a family of decidable propositional dynamic logics,
where atomic modalities satisfy some extra conditions
(for example, given by axioms of the logics $\LK{5}$, $\LS{5}$, or $\LK{45}$
for different atomic modalities).
It follows from recent results \cite{KSZ:AiML:2014}, \cite{KikotShapZolAiml2020} that
if a modal logic $L$ admits a special type of filtration (so-called definable filtration),
then its enrichments with modalities for the transitive closure and converse relations also admit
definable filtration. We
use these results to show that if logics $L_1, \ldots , L_n$ admit definable filtration,
then the propositional dynamic logic with converse extended by the fusion $L_1*\ldots * L_n$
has the finite model property.

\hide{
consider the case when $L$ is the fusion $L_1*\ldots * L_n$ of logics, and show that
if all $L_i$ admit definable filtration, then the Propositional Dynamic
Logic (with converses) extended with the axioms of $L_1,\ldots, L_n $ has the finite model property. In particular,
it follows that if $L_i$ are finitely axiomatizable, then the corresponding extension of Propositional Dynamic
Logic is decidable.

\todo{..}
We apply this result to the case when
$L$ is the fusion $L_1*\ldots L_n$ of different logics (e.g., of ) to show that
the extension of Propositional Dynamic
Logic (with converses) with the axioms of $L$ is complete with respect to its standard finite models.

\todo{..}
Using recent
transfer results for
logics enriched with modalities for the transitive closure and converse relations \cite{KSZ:AiML:2014}, \cite{KikotShapZolAiml2020},
we show that many extensions of Propositional Dynamic
Logic (with converses) is complete with respect to its standard finite models.

\todo{..}We construct a family of decidable propositional dynamic logics, where atomic modalities satisfy some extra conditions (for example, given by axioms of the logics K5, S5, or K45 for different atomic modalities). Using recent transfer results about the finite model property for multimodal logics that admit filtration [Kikot, Shapirovsky, Zolin, AiML2020], we show that many extensions of Propositional Dynamic Logic (with converses) are complete with respect to its standard finite models.
}

\smallskip
\noindent
{\bf Keywords}
Propositional Dynamic Logic with Converse,
definable filtration,
fusion of modal logics,
finite model property,
decidability
\end{abstract}


\section{Introduction}
The Propositional Dynamic Logic with Converse is known
to be complete with respect to its standard finite models, and hence is decidable \cite{Parikh1978CPDL}.
We generalize this result for a family of normal extensions of this logic.

Let $\CPDL(\Al)$ be the
propositional dynamic logic with converse modalities, where $\Al$ indicates the set of atomic modalities.
For a set of modal formulas $\Psi$ in the language of $\Al$,
let $\CPDL(\Al)+\Psi$ be the normal extension of $\CPDL(\Al)$ with $\Psi$.

In \cite{KSZ:AiML:2014} and \cite{KikotShapZolAiml2020},
it was shown that
if a modal logic $L$ admits a special type of filtration (so-called definable filtration),
then its enrichments with modalities for the transitive closure and converse relations also admit
definable filtration.  In particular, it follows that if a logic $L$ admits definable filtration,
then $\CPDL(\Al)+L$ has the finite model property.

We will be interested in the case when $\CPDL(\Al)$ is extended by a fusion of logics $L=L_1*\ldots * L_n$.
For example, $\CPDL(\Diamond_1,\Diamond_2,\Diamond_3)+\LK{5}*\LK{45}*\LK{4}$ is the extension
of $\CPDL(\Al)$, where the first and the second atomic modalities satisfy the principle $\Diamond p\imp \Box \Diamond p$,
the second and the third satisfy $\Diamond\Diamond p\imp \Diamond p$.
We show in Theorem \ref{thm:mainTransferNew} that if the logics $L_i$ admit definable filtration, then
their fusion admits definable filtration as well. It follows that in this case
$\CPDL(\Al)+L_1*\ldots * L_n$ has the finite model property, and, if all $L_i$ are finitely axiomatizable,
$\CPDL(\Al)+L_1*\ldots * L_n$ is decidable (Corollary \ref{cor:main}).
Consequently, we have the following decidability result (Corollary \ref{cor:final}):
if each $L_i$
is
\begin{itemize}
\item
one of the logics
$$\LK{},~\logicts{T},~\LK{4},~\LS{4},~
\LK{} + \{\Diamond^m p \to \Diamond p\}~(m \geq 1),$$ or an extension of any of these logics with a variable-free formula,
\hide{
\item
one of the logics
$\LK{},~\logicts{T},~\LS{4},~\LK{}+\{p\imp \Box\Diamond p\},~\LK{}+\{\Diamond\top\},~
\LK{4}+\{\Diamond\top\}$,
$\LK{} + \{\Diamond^m p \to \Diamond p\}$ for $m \geq 1$,  or
}
\item  locally tabular (e.g., $\LK{5},~\LK{45},~\LS{5}$, the difference logic), or
\item  a {\em stable logic} (defined in \cite{bezhanishvili2016stable}), or
\item axiomatizable by canonical MFP-modal formulas (defined in \cite{KikotShapZolAiml2020}),
\end{itemize}
then $\CPDL(\At)+L_1 * \ldots * L_n$ has the finite model property;
if also all $L_i$ are finitely axiomatizable, then $\CPDL(\At)+L_1 * \ldots * L_n$ is decidable.
Some particular instances of this fact (in the language without converse modalities) were known before:
for the case when each $L_i$ is a stable logic, it was announced in \cite{IlinAiML2016};
the case when each $L_i$ is axiomatizable by canonical MFP-modal formulas follows from \cite[Corollary 4.13]{KikotShapZolAiml2020}.

The paper is organized as follows. Section \ref{sec:prel} provides basic syntactic and semantic definitions.
Section \ref{sec:transfer} is an exposition of necessary transfer results from \cite{KSZ:AiML:2014} and \cite{KikotShapZolAiml2020}.
Main results (Theorem \ref{thm:mainTransferNew}, Corollary \ref{cor:main}, and Corollary \ref{cor:final}) are given in Section \ref{sec:main}.

\medskip
 A preliminary report on some results of this paper was given in \cite{RogozinShapAiML2022}.

\section{Syntactic and semantic preliminaries}\label{sec:prel}

We assume that the reader is familiar with basic notions 
of modal logic \cite{B:R:V:ML:2002, Ch:Za:ML:1997,Goldblatt1992LogOfTime}. Below we briefly recall some of them and fix notation.

\paragraph{Normal logics and Kripke semantics.}
Fix a set
$\PV = \{ p_i \: \mid \: i < \omega \}$  of {\em propositional variables}. For a set $\Al$, 
the {\em set of modal $\At$-formulas} $\Fm(\At)$
is built from propositional variables using Boolean connectives
$\bot,\to$ and unary connectives $\DiA{a}$ for $a\in \Al$ ({\em modalities}). Other connectives are defined in the standard way, in particular $[a]$ abbreviates  $\neg\DiA{a} \neg$.
Sometimes we write $\Diamond_a$ for $\DiA{a}$ and
$\Box_a$ for $[a]$.
If $\Al$ is a singleton $\{a\}$, we write $\Diamond$ and $\Box$ for $\DiA{a}$ and $[a]$, respectively.
\hide{
 The
{\em set of modal $\At$-formulas} $\Fm(\At)$ is generated by the following grammar:

\begin{center}
    $\varphi ::= \bot \: \mid \: p    \: \mid \: (\varphi \to \psi) \: \mid \: \DiA{a}  \varphi$ \quad $(
    p\in \PV, \; a \in \At)$
\end{center}
Other connectives are defined in the standard way,
in particular $[a]$ abbreviates  $\neg\DiA{a} \neg$.
}


A {\em (normal) modal $\At$-logic} is a set of formulas $L\subseteq\Fm(\At)$ such that:
\begin{enumerate}
\item $L$ contains all Boolean tautologies;
\item For all $a \in \Al$, $\DiA{a} \bot \leftrightarrow \bot \in L$ and $\DiA{a}  (p \lor q) \leftrightarrow \DiA{a}  p \lor \DiA{a}  q \in L$;
\item $L$ is closed under the rules of Modus Ponens,  uniform substitution,
and {\em monotonicity}:
$\vf \to \psi\in L$ implies
$\DiA{a} \vf \to \DiA{a} \psi\in L$
for all $a\in \At$.
\end{enumerate}
For an $\At$-logic $L$ and a set $\Psi$ of $\At$-formulas,
$L+\Psi$ is the smallest  modal $\At$-logic that contains $L\cup \Psi$. As usual, the smallest  unimodal logic is denoted by $\LK{}$.

An $\Al$-frame is a structure $F = (W, (R_a)_{a \in \Al})$,
where each $R_a$ is a binary relation on $W$.
A model on an $\Al$-frame
is a structure $M = (F, \vartheta)$,
where $\vartheta : \PV \to \clP{W}$, where $\clP{W}$
is the set of all subsets of $W$. The truth definition is standard:
\begin{itemize}
    \item $M, x \models p_i$ iff $x \in \vartheta(p_i)$;
    \item $M, x \not\models \bot$;
    \item $M, x \models \varphi \to \psi$ iff $M, x \not\models \varphi$ or $M, x \models \psi$;
    \item $M, x \models \DiA{a}  \varphi$ iff there exists $y$ such that $xR_a y$ and  $M, y \models \varphi$.
\end{itemize}
We set $M \models \varphi$ iff $M, x \models \varphi$ for all $x$  in $M$, and
$F\mo \vf$ iff $M\mo\vf$ for all $M$ based on $F$; $\Log(F)$ is the set
$\{\vf\in \Fm(\At) \mid F\mo \vf\}$. For a class $\clF$ of frames, $\Log(\clF)=\bigcap\{\Log(F)\mid F\in\clF\}$.
A logic $L$ is {\em Kripke complete} iff it is
characterized  by a class $\clF$ of frames, that is
$L = \Log(\clF)$. A logic $L$ has the {\em finite model property} iff it is characterized by a class of finite models, or equivalently, by
a class of finite frames (see, e.g., \cite[Theorem 3.28]{B:R:V:ML:2002}).
\later{Remove unused notation}

For a logic $L$, $\operatorname{Mod}(L)$ is
the class of models such that $M \models L$, i.e., $M \models \varphi$ for all $\varphi \in L$.

\later{
If $L$ is a set of formulas, then $M \models L$ stands for $M \models \varphi$ for all $\varphi \in L$.
}
\later{
The class of $L$-frames, $\operatorname{Frames}(L)$ consists of frames that validate $L$. The logic of a class of frames $\mathcal{F}$, $\operatorname{Log}(\mathcal{F})$,
is the set of formulas valid in each of those frames.
}

\paragraph{Propositional Dynamic Logics.}
Let $\At$ be finite.
The set $\Prog(\At)$ ({\em ``programs''}) is generated by the following grammar:
\begin{center}
    $e ::= a \: \mid \:  (e \cup e)  \: \mid \:   (e \circ e)  \: \mid \:   e^+ $ \quad \text{ for }$a \in \At$
\end{center}
\begin{remark} Our language of programs is test-free.
\end{remark}
\later{More details on (in)finiteness of the alphabet}

\begin{definition}
A {\em normal propositional dynamic $\At$-logic}
is a normal $\Prog(\At)$-logic
that contains the following formulas for all $  e ,  f   \in \Prog(\At)$:
\begin{enumerate}
\item[\bf A1] $\langle e \cup f \rangle p \leftrightarrow \langle e \rangle p \vee \langle f \rangle p$,
\item[\bf A2] $\langle e \circ f \rangle p \leftrightarrow \langle e \rangle \langle f \rangle p$,
\item[\bf A3] $\langle e \rangle p \to \langle e^{+} \rangle p$,
\item[\bf A4] $\langle e \rangle \langle e^{+} \rangle p \to \langle e^{+} \rangle p$,
\item[\bf A5] $\langle e^{+} \rangle p \to \langle e \rangle p \vee \langle e^{+} \rangle (\neg p \land \langle e \rangle p)$.
\end{enumerate}
The least normal propositional dynamic $\At$-logic
is denoted by $\PDL(\At)$.

\smallskip
We also consider dynamic logics  with converse modalities.
The set $\Prog_t(\At)$ is given by the following grammar:
\begin{center}
    $  e  ::= a \: \mid \:   (e \cup e)  \: \mid \:  (e \circ e) \: \mid \:  e^+  \:\mid  e^{-1}  \quad $ \text{ for } $a\in \At$
\end{center}
\later{\todo{Is everything consistent with angles and boxes?}}
A {\em normal propositional dynamic  $\At$-logic with converse modalities}
is a normal $\Prog_t(\At)$-logic
that
contains the formulas {\bf A1}--{\bf A5} and the formulas
\begin{enumerate}
\item[\bf A6] $p \to [e]\langle e^{-1} \rangle p$
\item[\bf A7] $p \to [e^{-1}]\langle e \rangle p$
\end{enumerate}
for all $  e ,  f   \in \Prog_t(\At)$.
The smallest dynamic $\At$-logic with converses is denoted by $\CPDL(\At)$.
\end{definition}

The validity of formulas
{\bf A1}-{\bf A7} in a frame {$(W,(R_e)_{e\in \Prog_t(\At)})$}
is equivalent to the following identities:
\begin{eqnarray}
\label{eq:SegConditions}
&&R_{(e\circ f)}=R_{ e }\circ R_{  f},~
R_{(e\cup f)}=R_{ e }\cup R_{ f},~
R_{e^+}=(R_{  e})^+,~\\
\label{eq:TempConditions}
&&R_{ e^{-1}  }=(R_{  e })^{-1},
\end{eqnarray}
\later{More details, since we have a mixture: an $L$-model and frame conditions}
where $R^+$ denotes the transitive closure of $R$, $R^{-1}$ the converse of $R$;
models based of such frames are called {\em standard}; see, e.g., \cite[Chapter 10]{Goldblatt1992LogOfTime}.
It is known that $\CPDL(\At)$ is
complete with respect to its standard finite models \cite{Parikh1978CPDL}.
Our aim is to prove this for a family of extensions of $\CPDL(\At)$.

\section{Filtrations and decidable extensions of dynamic logic}\label{sec:transfer}

\subsection{Logics that admit definable filtration}
For a model $M = (W, (R_a)_{a \in \Al}, \vartheta)$ and a set of formulas $\Gamma$, put
\begin{center}
$x\sim_{\Gamma} y$ \quad iff \quad$\forall \psi\in\Gamma\; (M,x\models \psi \Leftrightarrow M,y\models \psi)$.
\end{center}
The equivalence $\sim_\Gamma$ is said to be {\em induced by $\Gamma$ in $M$}.

For $\varphi \in \operatorname{Fm}(\Al)$, let $\operatorname{Sub}(\varphi)$ be the set of all subformulas of $\varphi$. A set $\Gamma$  of formulas is {\em $\operatorname{Sub}$-closed}, if $\varphi \in \Gamma$ implies $\operatorname{Sub}(\varphi) \subseteq \Gamma$.
\begin{definition}
Let $\Gamma$ be a $\operatorname{Sub}$-closed
set of formulas.
A {\em $\Gamma$-filtration} of a model $M = (W, (R_a)_{a \in \Al}, \vartheta)$   is a model $\ff{M}=(\ff{W},(\ff{R}_a)_{a \in \Al},\ff{\theta})$
s.t.
\begin{enumerate}
\item $\ff{W}=W/{\sim}$ for some equivalence relation $\sim$ such that $\sim \;\subseteq \;\sim_\Gamma$, i.e.,
\begin{center}
$x\sim  y$ \quad implies \quad
 $\forall \psi\in\Gamma\; (M,x\models \psi \Leftrightarrow M,y\models \psi)$.
\end{center}
\item ${\ff{M},[x]\models p}$ iff ${M,x\models p}$, for all $p\in \Gamma$.
Here $[x]$ is the class of $x$ modulo $\sim$.
\item For all $a \in \Al$, we have ${(R_a)}_{\sim} \subseteq \ff{R}_a \subseteq  {(R_a)}_{\sim}^\Gamma$, where
$$
\begin{array}{ccl}
~[x]\,{(R_a)}_\sim\,[y] & \text{iff} & \exists x'\sim x\ \exists y'\sim
y\;
(x'\,R_a\,y'),
\smallskip \\
~[x]\,{(R_a)}_{\sim}^\Gamma\,[y] & \text{iff} & \forall \psi\;
   (\DiA{a} \psi\in \Gamma \: \& \: M,y\models\psi \Rightarrow M,x\models \DiA{a}\psi ).
\end{array}
$$
\end{enumerate}
The relations ${(R_a)}_\sim$ and ${(R_a)}_\sim^\Gamma$ on $\widehat{W}$ are called
the \emph{minimal} and the \emph{maximal filtered relations}, respectively.
\end{definition}

If $\sim\; =\; \sim_\Delta$ for some finite 
set of formulas ${\Delta\supseteq\Gamma}$,
then $\ff{M}$ is called a \emph{definable $\Gamma$-filtration} of the model~$M$.
If $\sim\; =\; \sim_\Gamma$, the filtration $\ff{M}$ is said to be {\em strict}.

The following fact is standard:
\begin{lemma}[Filtration lemma]\label{Lemma:Filtration}
Suppose that $\Gamma$ is a finite $\operatorname{Sub}$-closed set of formulas
and $\ff{M}$ is a $\Gamma$-filtration of a model~$M$.
Then, for all points ${x\in W}$ and all formulas ${\varphi \in\Gamma}$,
we have: \ \
\begin{center}
${M,x\models \varphi}$ iff ${\widehat{M},[x]\models\varphi}$.
\end{center}
\end{lemma}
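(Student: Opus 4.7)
The plan is to establish the equivalence $M,x\models \vf$ iff $\ff{M},[x]\models \vf$ by induction on the structure of $\vf\in\Gamma$. Observe that whenever $\vf\in\Gamma$, the $\Sub$-closedness of $\Gamma$ gives us that every subformula of $\vf$ also lies in $\Gamma$, so the induction hypothesis applies to all subformulas we might encounter.

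For the base cases: $\vf=\bot$ is trivial since neither side holds, and $\vf=p\in\PV$ is handled directly by clause~2 of the definition of a $\Gamma$-filtration. The Boolean case $\vf=\psi_1\to\psi_2$ reduces immediately to the induction hypothesis applied to $\psi_1$ and $\psi_2$, using the definition of $\models$ on both models.

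The only substantive step is the modal case $\vf=\Di\psi\in\Gamma$, where one must use the two inclusions in clause~3 in their respective directions. For the forward direction, if $M,x\models\Di\psi$, pick $y$ with $xR_\Di y$ and $M,y\models \psi$; then $[x]\,(R_\Di)_\sim\,[y]$ trivially (take $x'=x,\ y'=y$), and the inclusion $(R_\Di)_\sim\subseteq \ff{R}_\Di$ gives $[x]\,\ff{R}_\Di\,[y]$. Since $\psi\in\Gamma$, the induction hypothesis yields $\ff{M},[y]\models\psi$, hence $\ff{M},[x]\models\Di\psi$. For the backward direction, assume $\ff{M},[x]\models\Di\psi$ and pick $[y]$ with $[x]\,\ff{R}_\Di\,[y]$ and $\ff{M},[y]\models\psi$; the induction hypothesis gives $M,y\models\psi$, and since $\Di\psi\in\Gamma$, the upper bound $\ff{R}_\Di\subseteq (R_\Di)_\sim^\Gamma$ together with the very definition of the maximal filtered relation yields $M,x\models\Di\psi$.

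There is no real obstacle here, since this is a standard structural induction; the only point requiring attention is matching the direction of implication with the correct inclusion in clause~3 (the minimal filtered relation for the ``upward'' transfer and the maximal one for the ``downward'' transfer) and invoking $\Sub$-closedness to ensure $\psi\in\Gamma$ so that the induction hypothesis is applicable.
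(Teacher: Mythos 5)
Your proof is correct and is precisely the standard structural induction that the paper itself omits, deferring to the literature (e.g., Chagrov--Zakharyaschev): clause~2 handles the atomic case, the minimal filtered relation gives the upward transfer for $\Di\psi$, and the maximal filtered relation gives the downward transfer, with $\Sub$-closedness guaranteeing the induction hypothesis applies. Nothing is missing.
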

\begin{proof}
Straightforward induction on $\vf$.
\end{proof}

\begin{definition}
We say that a class $\mathcal{M}$  of Kripke models \emph{admits  definable (strict) filtration}
iff for any ${M \in \mathcal{M}}$
and for any finite $\operatorname{Sub}$-closed set of formulas~$\Gamma$,
there exists a finite model in $\mathcal{M}$ that is a definable (strict) $\Gamma$-filtration of~$M$.
A logic {\em admits definable (strict) filtration} iff the class
$\operatorname{Mod}(L)$
of its models does.
\end{definition}

It is immediate from the Filtration lemma
that if a logic admits filtration, then  it has the finite model property.

Strict filtrations are the most widespread in the literature;
for example, it is well-known that the logics
$\LK{},~\logicts{T},~\LK{4},~\LS{4},~\LS{5}$ admit strict filtration,
 see e.g., \cite{Ch:Za:ML:1997}.  Constructions where the initial equivalence is refined
 were also used since the late 1960s \cite{Segerberg1968}, \cite{Gabbay:1972:JPL}, and
 later, see, e.g., \cite{Shehtman:AiML:2004}.\later{improve}
 Refining the initial equivalence makes the filtration method much more flexible.
 For example, it is not difficult to see that the logic $\LK{5}={\LK{}+\{\Diamond p\imp \Box \Diamond p\}}$ does not admit strict filtration.
 However, $\LK{5}$ admits definable filtration, see, e.g., \cite[Theorem 5.35]{Ch:Za:ML:1997}. Another explanation is that
 $\LK{5}$ is locally tabular \cite{nagle_thomason_1985},
 and every locally tabular logic admits definable filtration, see Section \ref{sec:LTimpADF} for details.

Notice that if a logic $L$ admits definable filtration, then its extension with a variable-free formula $\vf$ admits definable filtration as well (for a given $L+\{\vf\}$-model $M$ and $\Gamma$,
consider a $\Gamma\cup\operatorname{Sub}(\varphi)$-filtration).

\later{\ToDo{K5 is LT: Double Check the ref}; seems to be ok, see Corollary 5}

\subsection{Transferring admissibility of definable filtration}
In \cite{KSZ:AiML:2014} and \cite{KikotShapZolAiml2020}, definable filtrations were used to obtain
transfer results for logics enriched with modalities for the transitive closure
and converse relations.

Let $e\in \Al$.
For an $\Al$-logic $L$, let $\trans{L}{e}$ be the extension
of the logic
$L$ with axioms {\bf A3}, {\bf A4}, and {\bf A5}, and
let $\temp{L}{e}$ be the extension of $L$ with the axioms
{\bf A6} and {\bf A7}.

For an $\Al$-model $M=(W,(R_a)_{a\in\Al},\val)$,
let $\temp{M}{  e }$ be its expansion with the converse of $R_e$:
$$\temp{M}{  e }=(W,(R_a)_{a\in\Al},R_e^{-1}, \val);$$
similarly,  $\trans{M}{  e }$ denotes the expansion of $M$ with the transitive closure of $R_{ e }$:
$$\trans{M}{  e }=(W,(R_a)_{a\in\Al},R_{  e } ^+,\val).$$
It is straightforward from \eqref{eq:SegConditions}
and
\eqref{eq:TempConditions}
that if $M$ is an $L$-model, then
$\trans{M}{  e }$ is a model of $\trans{L}{ e }$,
and $\temp{M}{ e }$ is a model of $\temp{L}{e }$.

Assume that a logic $L$ admits definable filtration.
In \cite[Theorem 3.9]{KikotShapZolAiml2020}, it was shown that in this case
the logic $\trans{L}{ e }$ admits definable filtration as well.
This crucial result implied that
$\PDL(\At)+L$ has the finite model property, and if also $L$ is finitely axiomatizable, then
$\PDL(\At)+L$ is decidable \cite[Theorem 4.6]{KikotShapZolAiml2020}.

If follows from
\cite[Theorem 2.4]{KSZ:AiML:2014} that if $\vL$ admits definable filtration,
then so does $\temp{L}{ e}$.
\begin{remark}
Theorem \cite[Theorem 2.4]{KSZ:AiML:2014} was formulated for frames, not for models; however, the definable filtrations given
in the proof of this theorem work for models without any modification.
\end{remark}

\obsolete{
The following theorem is a corollary of
\cite[Theorem 4.6]{KikotShapZolAiml2020} and the proof of \cite[Theorem 2.4]{KSZ:AiML:2014}.
\ToDo{details}
\ISH{This requires some more work, since
 \cite[Theorem 2.4]{KSZ:AiML:2014} was given for different typo of filtrations. It is on me.
}
}
\begin{theorem}[\cite{KikotShapZolAiml2020},\cite{KSZ:AiML:2014}] \label{thm:ADFforCPDL}
Let  $\AlB$  be a subset of a finite\later{``finite'' should be recursive for A, and finite for B}
set $\Al$.
If a $\AlB$-logic $L$ admits definable filtration, then
$\CPDL(\At)+L$ has the finite model property.
If also $L$ is finitely axiomatizable, then $\CPDL(\At)+L$ is decidable.
\end{theorem}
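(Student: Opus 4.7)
The plan is to reduce $\CPDL(\At)+L$, via a translation that pushes converses down to atomic programs, to a logic to which the transfer theorem \cite[Theorem 4.6]{KikotShapZolAiml2020} already applies. Decidability under finite axiomatizability then follows in the standard way, combining recursive enumerability of theorems with the recursive enumeration of non-theorems supplied by the finite model property.

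First I would view the $\AlB$-logic $L$ as an $\At$-logic $L_0$, imposing no new conditions on the modalities in $\At\setminus\AlB$; a routine filtration argument — filtering the extra relations by their minimal filtered counterparts, as for $\LK{}$ — shows that $L_0$ still admits definable filtration. Because $\At$ is finite, I then iterate \cite[Theorem 2.4]{KSZ:AiML:2014} once per $\Di\in\At$, introducing at each step a fresh atomic modality in the role of $\Di^{-1}$ together with the axioms \textbf{A6} and \textbf{A7} relating the two. After finitely many steps I obtain a logic $L_1$, over the finite alphabet $\At'=\At\cup\{\Di^{-1}:\Di\in\At\}$, that still admits definable filtration. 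Applying \cite[Theorem 4.6]{KikotShapZolAiml2020} to $L_1$ now yields that $\PDL(\At')+L_1$ has the finite model property, and is decidable whenever $L_1$ is finitely axiomatizable — which, since only finitely many axioms are added to $L$, happens precisely when $L$ is.

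To finish, I would define a translation $\vf\mapsto\vf^{\flat}$ from $\Prog_t(\At)$-formulas to $\Prog(\At')$-formulas that recursively rewrites the converse of a complex program using the identities
\[
(e\cup f)^{-1}=e^{-1}\cup f^{-1},\quad (e\circ f)^{-1}=f^{-1}\circ e^{-1},\quad (e^+)^{-1}=(e^{-1})^+,\quad (e^{-1})^{-1}=e.
\]
On standard frames these identities are forced by \eqref{eq:SegConditions} and \eqref{eq:TempConditions}, so truth in standard models of $\CPDL(\At)+L$ is preserved by the rewriting. Conversely, every finite standard model of $\PDL(\At')+L_1$ expands uniquely to a finite standard model of $\CPDL(\At)+L$ by interpreting $R_{\langle e^{-1}\rangle}$ for complex $e$ through the same identities. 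Thus $\vf\in\CPDL(\At)+L$ iff $\vf^{\flat}\in\PDL(\At')+L_1$, and the finite model property (together with decidability under finite axiomatizability) transfers.

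The main obstacle I anticipate is the bookkeeping in this last step: one must verify that on the expanded model all instances of the axioms \textbf{A1}--\textbf{A7} hold for arbitrary complex programs, and that $L$ — which mentions only the modalities in $\AlB\subseteq\At$ — remains valid in the expansion. Beyond that verification, the argument is an application of the two cited transfer results within the finite alphabet $\At$.
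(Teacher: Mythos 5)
The paper does not prove this theorem: it is imported wholesale from \cite{KikotShapZolAiml2020} and \cite{KSZ:AiML:2014}, and the route the paper's discussion points to is a direct one -- run the filtration construction of \cite[Theorem 4.6]{KikotShapZolAiml2020} on the finitely many programs occurring in the target formula, handling $\cup$ and $\circ$ directly and invoking the two ADF-transfer results for ${}^{+}$ and ${}^{-1}$ as program constructors inside that construction. Your proposal is a genuinely different decomposition: you first reduce $\CPDL(\At)+L$ to $\PDL(\At')+L_1$ over the doubled alphabet $\At'$ by pushing converses down to atoms, applying \cite[Theorem 2.4]{KSZ:AiML:2014} only finitely many times at the atomic level, and then use the $\PDL$ transfer theorem as a black box. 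This buys modularity (converse is dealt with once and for all, before any dynamic machinery enters), at the price of a syntactic reduction lemma that the direct construction avoids. The preliminary step of extending $L$ to an $\At$-logic $L_0$ is also not quite "routine": since a $\operatorname{Sub}$-closed $\Gamma$ may contain mixed-alphabet formulas, you need the surrogate-variable trick of Theorem \ref{thm:mainTransferNew} (applied to the easy fusion $L*\LK{}*\dots*\LK{}$) rather than a bare appeal to minimal filtered relations.

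The one genuine gap is in the equivalence $\vf\in\CPDL(\At)+L$ iff $\vf^{\flat}\in\PDL(\At')+L_1$. The direction you actually need for the finite model property is: if $\vf^{\flat}$ is a theorem of $\PDL(\At')+L_1$, then $\vf$ is a theorem of $\CPDL(\At)+L$. Your justification -- that the rewriting identities "are forced on standard frames" -- is a semantic statement about standard models, but $\CPDL(\At)+L$ is defined syntactically as a normal extension, and its completeness with respect to standard models is not available at this point (it is a consequence of the theorem being proved). What is required is the \emph{provable} equivalence $\CPDL(\At)+L\vdash \vf\leftrightarrow\vf^{\flat}$, i.e., that $\langle (e\circ f)^{-1}\rangle p\leftrightarrow\langle f^{-1}\rangle\langle e^{-1}\rangle p$, $\langle (e\cup f)^{-1}\rangle p\leftrightarrow\langle e^{-1}\rangle p\vee\langle f^{-1}\rangle p$, and $\langle (e^{+})^{-1}\rangle p\leftrightarrow\langle (e^{-1})^{+}\rangle p$ are derivable from {\bf A1}--{\bf A7}. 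This does hold -- {\bf A6} and {\bf A7} make $\langle e^{-1}\rangle$ the unique conjugate of $\langle e\rangle$ up to provable equivalence, and each right-hand side is easily checked to be a conjugate of $\langle e\circ f\rangle$, $\langle e\cup f\rangle$, $\langle e^{+}\rangle$ respectively -- but it is a lemma that must be stated and proved, not bookkeeping on models; without it the reduction only shows that $\vf$ and $\vf^{\flat}$ agree on standard frames, which does not yet connect the two syntactically defined logics.
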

\later{``$A$ is finite'': does it matter?}

\section{Filtrations for fusions}\label{sec:main}

\subsection{Fusions}
Let $L_1, \ldots, L_n$ be logics in languages
that have mutually disjoint sets of modalities. The {\em fusion} $L_1 * \ldots * L_n$ is the smallest logic that contains $L_1, \ldots, L_n$.
We adopt the following convention: for logics $L_1,\ldots, L_n$ in the same language,
we also write $L_1 * \ldots * L_n$  assuming that we ``shift'' modalities; e.g.,
 $\LK{5}*\LK{5}$ denotes the bimodal logic given by the two axioms
 $\Diamond_i p\imp \Box_i\Diamond_i p$, $i=1,2$. \later{$i=0,1$}

It is known that the fusion of consistent modal logics is a conservative extension of its components \cite{thomason1980independent}.
Also, the fusion operation preserves
Kripke completeness, decidability, and the finite model property \cite{kracht1991properties,fine1996transfer,wolter1996fusions}.
\later{\ToDo{Double check the second ref}}

In \cite{KikotShapZolAiml2020}, it was noted that if canonical logics $L_1,\ldots, L_n$ admit strict filtration,
then the fusion $L=L_1*\ldots *L_n$ admits strict filtration; it follows from
Theorem \ref{thm:ADFforCPDL} that $\CPDL(\At)+L$ has the finite model property for the case of such $L$.

\begin{example}
The logic $\CPDL(\Diamond_1,\Diamond_2)+\LS{4}*\LS{5}$ has the finite model property and decidable.
\end{example}
\hide{
\todo{redo}
Moreover, this observation can be generalized as follows:
if $L$ admits strict filtration and $L'$ admits definable filtration, than
the fusion $L*L'$ admits definable filtration.

\begin{example}
The logic $\CPDL(\Di_1,\Di_2,\Di_3)+\LS{4}*\LS{5}*\LK{5}$ has the finite model property and decidable.
\end{example}
}
\noindent
It does not cover many important examples where logics $L_i$ do not admit strict filtration
(like in the case of the logic $\LK{5}*\LK{5}$).
We will show  below that the admissibility of definable filtration is preserved under the operation of fusion, that
extends applications of Theorem \ref{thm:ADFforCPDL} significantly.

\subsection{Main result}
Recall that a set of formulas $\Psi$ is {\em valid} in a modal algebra $B$, in symbols $B\mo \Psi$, iff
$\vf=1$ holds in $B$ for every $\vf\in \Psi$.

For a model $M=(W,(R_a)_{a\in \Al},\val)$ and an $\Al$-formula $\vf$, put $\vf_M=\{x\mid M,x\mo\vf\}.$
Let $D(M)=\{\vf_M\mid \vf\in \Fm(\At)\}$
 be the set of definable subsets of $M$, considered as a Boolean subalgebra of the powerset algebra $\clP{W}$, and
let
$\Alg(M)$ be the modal algebra  $(D(M),(f_a)_{a\in \Al})$, where
$f_a(V)=R_a^{-1}[V]$ for $V\subseteq W$.
The following fact is standard: if $L$ is a logic, then
\begin{equation}\label{eq:model-alg}
M\mo L \text{ iff } \Alg(M)\mo L
\end{equation}
(``if'' is trivial, ``only if'' follows from the fact that logics are closed under substitutions).
If $M'=(W,(R_a)_{a\in \Al},\val')$ is a model such that
$\val'(p)\in D(M)$ for all variables $p$, then
it follows from \eqref{eq:model-alg} that
\begin{equation}\label{eq:robustmodel}
\text{if $M\mo L$, then $M'\mo L$};
\end{equation}
indeed, $\Alg(M')$ is a subalgebra of $\Alg(M)$.

\begin{proposition}\label{prop:extensible-filtr}
Let $\Gamma$ be a $\operatorname{Sub}$-closed
set of formulas,
$M = (W, (R_a)_{a \in \Al}, \vartheta)$ a model. If
$\ff{M}=(W/{\approx},(\ff{R}_a)_{a \in \Al},\ff{\theta})$ is
a $\Gamma$-filtration of $M$ for some equivalence $\approx$, then for every equivalence $\sim$ finer than $\approx$
there exists a $\Gamma$-filtration $\ff{M}'$ of $M$ such that $W/{\sim}$ is the carrier of $\ff{M}'$ and
\begin{equation}\label{eq:extensible-filtr}
\ff{M}\mo\vf \text{ iff } \ff{M}'\mo \vf
\end{equation}
for every $\vf\in\Fm(\Al)$.
\end{proposition}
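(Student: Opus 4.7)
The plan is to obtain $\ff{M}'$ as the pullback of $\ff{M}$ along the canonical surjection
\[
\pi \colon W/{\sim} \twoheadrightarrow W/{\sim_\Delta}, \qquad \pi([x]_\sim) = [x]_{\sim_\Delta},
\]
which is well-defined precisely because $\sim$ refines $\sim_\Delta$. Concretely, I would declare $[x]_\sim \ff{R}_\Di' [y]_\sim$ to mean $\pi([x]_\sim)\,\ff{R}_\Di\,\pi([y]_\sim)$, and set $\ff{\vartheta}'(p) = \pi^{-1}(\ff{\vartheta}(p))$ for every $p \in \PV$.

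Two things then need checking. First, that $\ff{M}'$ is a $\Gamma$-filtration of $M$: since $\Delta \supseteq \Gamma$, we have $\sim \,\subseteq\, \sim_\Delta \,\subseteq\, \sim_\Gamma$, so condition~1 holds; condition~2 on propositional letters is inherited from $\ff{M}$ through $\pi$; for condition~3, the minimal-relation inclusion follows because any witnesses $x'\sim x,\,y'\sim y,\,x'R_\Di y'$ also witness $[x]_{\sim_\Delta}\,(R_\Di)_{\sim_\Delta}\,[y]_{\sim_\Delta} \subseteq \ff{R}_\Di$, while the maximal-relation inclusion uses the key observation that the defining condition of $(R_\Di)_\sim^\Gamma$ mentions only formulas in $\Gamma$ and is therefore $\sim_\Gamma$-invariant. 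Hence $(R_\Di)_\sim^\Gamma$ is exactly the $\pi$-pullback of $(R_\Di)_{\sim_\Delta}^\Gamma$, and the known inclusion $\ff{R}_\Di \subseteq (R_\Di)_{\sim_\Delta}^\Gamma$ transports accordingly.

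Second, to obtain \eqref{eq:extensible-filtr} I would show that $\pi$ is a surjective bounded morphism from $\ff{M}'$ onto $\ff{M}$. Surjectivity, the atomic clause, and the \emph{forth} condition are immediate from the construction; for \emph{back}, given $\pi([x]_\sim)\,\ff{R}_\Di\,[v]_{\sim_\Delta}$, picking any $y \in [v]_{\sim_\Delta}$ produces a class $[y]_\sim$ with $\pi([y]_\sim) = [v]_{\sim_\Delta}$ and $[x]_\sim \ff{R}_\Di' [y]_\sim$ by definition. The standard bounded-morphism lemma then gives $\ff{M}',[x]_\sim \mo \vf$ iff $\ff{M},\pi([x]_\sim) \mo \vf$ for every $\vf \in \Fm(\Al)$, which upgrades to \eqref{eq:extensible-filtr} via surjectivity of $\pi$.

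I do not anticipate a real obstacle: once the pullback is in place, both the filtration clauses and the bounded-morphism clauses are routine. The only mildly delicate point is noticing that the maximal filtered relation is $\sim_\Gamma$-invariant, which is what lets the inclusion $\ff{R}_\Di \subseteq (R_\Di)_{\sim_\Delta}^\Gamma$ be transported to the finer quotient without additional work.
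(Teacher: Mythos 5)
Your proposal is correct and follows essentially the same route as the paper: the new model is the pullback of $\ff{M}$ along the canonical surjection $W/{\sim}\twoheadrightarrow W/{\sim_\Delta}$, which is a surjective p\nobreakdash-morphism, and the filtration clauses are verified exactly as you describe (the paper checks truth-preservation by a direct induction rather than citing the bounded-morphism lemma, and notes the p\nobreakdash-morphism explicitly in a remark). The only cosmetic slip is justifying $\sim_\Delta\subseteq\sim_\Gamma$ by ``$\Delta\supseteq\Gamma$'', which is not literally among the hypotheses; it follows instead from condition~1 of the definition of a $\Gamma$-filtration.
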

\newcommand\crs[1]{{#1}^{\approx}}
\begin{proof}
Since $\sim \;\subseteq \;\approx$, for every $u \in W/{\sim}$ there exists a unique
element of $\widehat{M}$ that contains $u$; we denote it by
$\crs{u}$. The binary relations $\ff{R}_a'$ in $\ff{M}'$
and the valuation
$\ff{\theta}'$
are defined as follows:
\begin{align*}
    &\ff{R}_a'=\{(u,v) \mid  (\crs{u},\crs{v})\in  \ff{R}_a \};\\
    &\ff{\theta}'(p)=\{u\in W/{\sim}\mid \crs{u}\in \ff{\theta}(p)\} \text{ for } p\in \PV.
\end{align*}
It is straightforward that the map $u\mapsto u\Delta$ is a p-morphism of a model $\ff{M}'$ onto  $\ff{M}$.
By the p-morphism lemma (see,
e.g., \cite[Section 1]{Goldblatt1992LogOfTime}), we have
\begin{equation}\label{eq:uDelta}
\ff{M}',u\mo\vf \text{ iff } \ff{M},\crs{u}  \mo \vf.
\end{equation}
Now \eqref{eq:extensible-filtr} follows.

Trivially, $\sim\;\subseteq \;\sim_\Gamma$. The second filtration condition
follows from the definition of $\ff{\theta}'$.  Let $a\in\Al$.
For $x\in W$, let $[x]_\approx$ and  $[x]_\sim$  be the classes of $x$ modulo $\approx$ and $\sim$, respectively.
If $xR_a y$, then $[x]_\approx \ff{R}_a [y]_\approx$, because $\ff{M}$ is a filtration of $M$;
now $[x]_\sim \ff{R}_a' [y]_\sim$  by the definition of $\ff{R}_a'$.
That $\ff{R}'_a$ is contained in the maximal filtered relation follows from \eqref{eq:uDelta}.
\end{proof}
\hide{
\begin{remark}
This proposition is a version of a construction called {\em refinement}, see, e.g., \cite[Chapter 8]{Ch:Za:ML:1997}.
In the above proof,
the map $u\mapsto u\Delta$ is a {\em p-morphism} of models, and
the algebras $\Alg{\ff{M}}$ and $\Alg{\ff{M}'}$ are isomorphic.
\end{remark}
}

The following is a generalization of \cite[Theorem 4.8]{KikotShapZolAiml2020}.
\begin{theorem}\label{thm:mainTransferNew}~ 
  If logics $L_1$ and $L_2$ admit definable filtration, so does $L_1*L_2$.
\end{theorem}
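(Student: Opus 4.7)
The plan is to filter the two modal reducts of $M$ separately and then glue them on a common finite quotient via Proposition~\ref{prop:extensible-filtr}. Let $\Al = \Al_1 \sqcup \Al_2$ where $L_i$ is an $\Al_i$-logic, let $M = (W, (R_\Di)_{\Di\in\Al}, \val) \mo L_1 * L_2$, and let $\Gamma \subseteq \Fm(\Al)$ be finite and Sub-closed.

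The first step is the coding of each reduct. For $i \in \{1,2\}$ and $j = 3-i$, replace each maximal $\Al_j$-modal subformula $\psi$ occurring in $\Gamma$ by a fresh propositional variable $p_\psi$; this transforms $\Gamma$ into a finite Sub-closed set $\Gamma_i$ of $\Al_i$-formulas. Define $M_i^* = (W, (R_\Di)_{\Di \in \Al_i}, \val_i^*)$ by restricting to $\Al_i$-modalities and extending the valuation via $\val_i^*(p_\psi) = \valext{\psi}$ (computed in $M$). Since $M \mo L_i$ and the fresh variables do not occur in $L_i$-formulas, $M_i^* \mo L_i$; the coding also gives $\sim_{\Gamma_i}^{M_i^*} = \sim_\Gamma^M$, and more generally, any $\Al_i$-set $\Delta_i$ corresponds, under back-substitution $p_\psi \mapsto \psi$, to an $\Al$-set $\Delta_i^\circ$ with $\sim_{\Delta_i}^{M_i^*} = \sim_{\Delta_i^\circ}^M$.

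Now apply the definable-filtration hypothesis to each $M_i^*$ and $\Gamma_i$: we get a finite $\Delta_i \supseteq \Gamma_i$ and a $\Gamma_i$-filtration $\ff{M_i^*}$ of $M_i^*$ with $\ff{M_i^*} \mo L_i$. Set $\sim = \sim_{\Delta_1^\circ \cup \Delta_2^\circ}^M$; then $\sim$ has finite index and refines each $\sim_{\Delta_i}^{M_i^*}$. By Proposition~\ref{prop:extensible-filtr}, each $\ff{M_i^*}$ refines to a $\Gamma_i$-filtration $\ff{M_i^*}'$ of $M_i^*$ carried by $W/{\sim}$ with $\ff{M_i^*}' \mo L_i$ (via \eqref{eq:extensible-filtr}). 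Define $\ff{M}'$ on $W/{\sim}$ by taking its $\Al_i$-relations from $\ff{M_i^*}'$ and setting $\ff{\val}'(p) = \{[x] \mid x \in \val(p)\}$ for $p \in \Gamma$ and $\ff{\val}'(p) = \emptyset$ otherwise.

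Two things remain. First, that $\ff{M}'$ is a $\Gamma$-filtration of $M$: since $\sim$ refines $\sim_\Gamma^M$ and the valuation is canonical on $\Gamma$, the only delicate point is to verify, for each $\Di \in \Al_i$, the minimal and maximal filtered-relation conditions with respect to $\Gamma$ rather than $\Gamma_i$, which follows from the correspondence $\Di p_\psi \leftrightarrow \Di\psi$ given by the encoding. Second, that $\ff{M}' \mo L_1 * L_2$: the $\Al_i$-reduct of $\ff{M}'$ has the same $\Al_i$-relations as $\ff{M_i^*}'$, and its valuation lies in $\Alg(\ff{M_i^*}')$ (since for $p \in \Gamma$ both models assign the same value, and $\emptyset$ belongs to every Boolean algebra), so by \eqref{eq:robustmodel}, $\ff{M}' \mo L_i$ for each $i$ and hence $\ff{M}' \mo L_1 * L_2$. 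The principal obstacle is the coordination step: filtering the two reducts independently yields different quotients, and the combined relations must still form a filtration with respect to the original $\Gamma$ — Proposition~\ref{prop:extensible-filtr} is exactly what allows one to reconcile both filtrations on a common finite quotient without sacrificing validity of either $L_i$.
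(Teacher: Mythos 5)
Your proposal is correct and follows essentially the same route as the paper's proof: encode the alien-modal content of $\Gamma$ by fresh variables so that each reduct of $M$ can be filtered by the hypothesis on $L_i$, then use Proposition~\ref{prop:extensible-filtr} to move both filtrations onto the common quotient $W/{\sim}$ (with $\sim$ the intersection of the two defining equivalences), combine the relations, and decode. The only difference is cosmetic: you substitute a fresh variable for each maximal $\Al_j$-modal subformula, whereas the paper introduces a variable $q_\varphi$ for every $\varphi\in\Gamma$ and works with the sets $V\cup\{\Diamond q_\varphi\mid\Diamond\varphi\in\Gamma,\ \Diamond\in\Al_i\}$; both encodings yield the same correspondence between the filtered-relation conditions for $\Gamma_i$ and those for $\Gamma$.
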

\begin{proof}
Let $\Al$ and $\AlB$ be alphabets of modalities of the logics $L_1$ and $L_2$, respectively.
Without loss of generality we may assume that
  $\Al$ and $\AlB$ are disjoint.

  Consider an $L_1*L_2$-model $M  = (W, (R_a)_{a\in \Al}, (R_b)_{b\in \AlB}, \vartheta)$,
    and a finite $\Sub$-closed set of formulas $\Gamma\subset \Fm(\Al\cup\AlB)$.
    Consider a set of fresh variables $V = \{ q_{\varphi} \mid \varphi \in \Gamma \}$, and
    define a valuation $\eta$ in $W$ as follows:
for $q_\vf\in V$, let $\eta(q_\vf)=\{x\mid M,x\mo\vf\}$; otherwise, put  $\eta(q)=\emp$.
Let $M_V=(W, (R_a)_{a\in \Al}, (R_b)_{b\in \AlB}, \eta)$.
We have:
\begin{equation}\label{eq:red0}
  D(M_V)\subseteq D(M),
\end{equation}
and  by \eqref{eq:robustmodel},
\begin{equation}\label{eq:red1}
M_V\mo L_1*L_2.
\end{equation}
Consider the $\Al$- and $\AlB$-reducts of  $M_V$:
$$M_\Al=(W, (R_a)_{a\in \Al}, \eta),\quad M_\AlB=(W, (R_b)_{b\in \AlB}, \eta).$$
It follows from \eqref{eq:red1} that
\begin{equation}\label{eq:red2}
M_\Al\mo L_1, \quad M_\AlB\mo L_2.
\end{equation}
Consider the following sets of formulas:
$$\Gamma_\Al = V \cup \{ \DiA{a} q_{\varphi} \mid \DiA{a} \varphi \in\Gamma\,\& \,a   \in \Al\}, \quad
\Gamma_\AlB = V \cup \{ \DiA{b} q_{\varphi} \mid \DiA{b} \varphi \in\Gamma\,\& \,b   \in \AlB \}.$$
Since logics $L_1$ and $L_2$ admit definable filtration, there are finite sets $\Delta_\Al$ and
$\Delta_\AlB$ of formulas, and models $\ff{M}_\Al$, $\ff{M}_\AlB$ such that
\begin{align}
&\ff{M}_\Al\mo L_1,& &\ff{M}_\AlB\mo L_2,  \label{eq:red3} \\
&\Gamma_\Al\subseteq \Delta_\Al\subset \Fm(\Al),&
&\Gamma_\AlB\subseteq \Delta_\AlB\subset \Fm(\AlB),& \\
&\text{$\ff{M}_\Al$ is a $\Gamma_\Al$-filtration of $M_\Al$},&
&\text{$\ff{M}_\AlB$ is a $\Gamma_\AlB$-filtration of $M_\AlB$},& \\
&\text{the carrier of $\ff{M}_\Al$ is $W/{\sim_\Al}$,}&
&\text{the carrier of $\ff{M}_\AlB$ is $W/{\sim_\AlB}$,}&
\end{align}
where $\sim_\Al$ is the equivalence on $W$ induced by $\Delta_\Al$ in $M_\Al$,
and  $\sim_\AlB$ is the equivalence on $W$ induced by $\Delta_\AlB$ in $M_\AlB$.
Let $\sim$ be the equivalence $\sim_\Al\cap \sim_\AlB$.
By Proposition \ref{prop:extensible-filtr} and \eqref{eq:red3}, there are models
$\ff{M}_\Al'$ and $\ff{M}_\AlB'$ whose carrier is $W{/}{\sim}$ such that
\begin{align}
&\ff{M}_\Al'\mo L_1,& &\ff{M}'_\AlB\mo L_2,  \label{eq:red3-1}  \\
&\text{$\ff{M}'_\Al$ is a $\Gamma_\Al$-filtration of $M_\Al$},& \label{eq:red3-2}
&\text{$\ff{M}'_\AlB$ is a $\Gamma_\AlB$-filtration of $M_\AlB$}.&
\end{align}
Notice that $\Gamma_\Al$ and $\Gamma_\AlB$ contain the same variables, namely $V$.
The value of any variable in $V$ is the same in $\ff{M}_\Al'$ as in $\ff{M}_\AlB'$.
Also,  we can assume that
the values of variables not in $V$ are empty in these models: making them empty
does not affect \eqref{eq:red3-1} by \eqref{eq:robustmodel}, and \eqref{eq:red3-2}  by the definition of filtration. Consequently, we can assume that $\ff{M}'_\Al$ and $\ff{M}'_\AlB$ have the same valuation:
\begin{equation}
\ff{M}_\Al' = (W/{\sim},(\ff{R}_a)_{a\in\Al},\ff{\eta}),\quad
\ff{M}_\AlB' = (W/{\sim},(\ff{R}_b)_{b\in\AlB},\ff{\eta}).
\end{equation}
By \eqref{eq:red3-2},  the model
\begin{equation}\label{eq:ref-Mv}
\ff{M}_V = (W/{\sim},(\ff{R}_a)_{a\in\Al}, (\ff{R}_b)_{b\in\AlB},\ff{\eta}) \text{ is a $(\Gamma_\Al\cup \Gamma_\AlB)$-filtration of $M_V$}.
\end{equation}
By \eqref{eq:red3-1},
\begin{equation}\label{eq:red4}
  \ff{M}_V \mo L_1*L_2.
\end{equation}
Finally, let $\ff{M}=(W/{\sim},(\ff{R}_a)_{a\in\Al}, (\ff{R}_b)_{b\in\AlB},\ff{\theta})$,
where $\ff{\theta}(p)=\ff{\eta}(q_p)$  for $p\in \Gamma$, and $\ff{\theta}(p)=\emp$ otherwise.
By \eqref{eq:red4} and \eqref{eq:robustmodel},
$$
\ff{M} \mo L_1*L_2.
$$

Let us show that $\ff{M}$ is a definable $\Gamma$-filtration of $M$.

First, observe that $\sim$ is induced in $M_V$ by the set $\Delta_\Al\cup\Delta_\AlB$, and so it is induced in $M$ by a set of formulas according to (\ref{eq:red0}).
Since $V\subseteq \Delta_\Al\cup\Delta_\AlB$, the equivalence $\sim$ refines
the equivalence $\sim_\Gamma$ induced in $M$ by $\Gamma$.

Let $c\in \Al\cup\AlB$.
That $\ff{R}_c$ contains the corresponding minimal filtered relation follows from   \eqref{eq:ref-Mv}.
Let us  show that $\ff{R}_c$ is contained in the maximal filtered relation $(R_c)_\sim^\Gamma$.
 Notice that by the definition of $\eta$, for every $\vf\in\Gamma$, $z\in W$,
 \begin{equation}\label{eq:red-eta-di}
 M_V,z\mo q_\vf \text{ iff } M,z\mo \vf, \text{ and hence }
 M_V,z\mo \DiA{c} q_\vf \text{ iff } M,z\mo \DiA{c} \vf.
 \end{equation}
Consider $\sim$-classes $[x]$, $[y]$ of $x,y\in W$, and assume that $\DiA{c}\vf\in \Gamma$ and $M,y\mo \vf$.
By \eqref{eq:red-eta-di},  $M_V,y\mo q_\vf$.
We have $\DiA{c} q_\vf\in \Gamma_\Al\cup\Gamma_\AlB$, so by \eqref{eq:ref-Mv}, $M_V,x\mo \DiA{c} q_\vf$. By \eqref{eq:red-eta-di} again, $M,x\mo\DiA{c}\vf$.
\end{proof}

\begin{example}
By the above theorem, $\LK{5}*\LK{5}$ admits definable filtration.
Consequently, the logic $\CPDL(\Diamond_1,\Diamond_2)+\LK{5}*\LK{5}$ has the finite model property and decidable.
\end{example}
\begin{remark}
Dynamic logics based on atomic modalities satisfying $\LK{5}$ are considered in the context of
epistemic logic and logical investigation of game theory, see, e.g., \cite{FittingGames2011}
(in this context, the axiom $\Diamond p\imp \Box \Diamond p$ is usually addressed as {\em negative introspection}).
\end{remark}

From Theorems \ref{thm:ADFforCPDL} and \ref{thm:mainTransferNew}, we obtain:

\begin{corollary}\label{cor:main}
Let $\Al$ be a finite set,  $L_1, \ldots, L_n$ be logics
such that $L_1 * \ldots * L_n\subseteq \Fm(\Al)$.
If $L_1, \ldots, L_n$ admit definable filtration, then $\CPDL(\At)+L_1 * \ldots * L_n$ has the finite model property.
If also $L_1, \ldots, L_n$ are finitely axiomatizable,
then $\CPDL(\At)+L_1 * \ldots * L_n$ is decidable.
\end{corollary}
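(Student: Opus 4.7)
The plan is to reduce the corollary to Theorems \ref{thm:ADFforCPDL} and \ref{thm:mainTransferNew}. The key intermediate step is to establish that the fusion $L_1 * \ldots * L_n$ itself admits definable filtration; once this is in hand, the conclusion follows almost immediately from Theorem \ref{thm:ADFforCPDL}, since by assumption $L_1 * \ldots * L_n \subseteq \Fm(\Al)$ with $\Al$ finite.

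I would prove the intermediate claim by induction on $n$. The base case $n=1$ is immediate. For the step from $n-1$ to $n$, set $L' = L_1 * \ldots * L_{n-1}$; by the induction hypothesis $L'$ admits definable filtration, and by assumption so does $L_n$. Theorem \ref{thm:mainTransferNew} is stated for fusions of two logics in disjoint alphabets of modalities, so I would invoke the convention introduced just after the definition of fusion and assume without loss of generality that the alphabets of $L_1,\ldots,L_n$ are pairwise disjoint; in particular the alphabet of $L'$ is disjoint from that of $L_n$. Applying Theorem \ref{thm:mainTransferNew} to the pair $(L', L_n)$ yields that $L' * L_n = L_1 * \ldots * L_n$ admits definable filtration.

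With the intermediate claim established, the first part of Theorem \ref{thm:ADFforCPDL}, applied with the $\AlB$-logic $L := L_1 * \ldots * L_n$ inside the finite $\Al$, gives the finite model property of $\CPDL(\At) + L_1 * \ldots * L_n$. For the decidability clause, I would use the standard observation that the fusion of finitely axiomatizable logics is finitely axiomatizable: if each $L_i$ is axiomatized over the base modal logic by a finite set $\Psi_i$, then $\Psi_1 \cup \ldots \cup \Psi_n$ is a finite axiomatization of $L_1 * \ldots * L_n$. The second part of Theorem \ref{thm:ADFforCPDL} then yields decidability of $\CPDL(\At) + L_1 * \ldots * L_n$.

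There is essentially no substantive obstacle here: the proof is a bookkeeping combination of the two main theorems of the previous sections. The only routine verifications are that the ``shift of modalities'' convention makes the iterated application of Theorem \ref{thm:mainTransferNew} well defined, and that finite axiomatizability of the components transfers to the fusion.
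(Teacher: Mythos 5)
Your proposal is correct and matches the paper's intended argument exactly: the paper derives the corollary directly from Theorem \ref{thm:mainTransferNew} (iterated to get definable filtration for the $n$-fold fusion) together with Theorem \ref{thm:ADFforCPDL}, which is precisely your induction plus the standard facts that fusions of finitely axiomatizable logics are finitely axiomatizable and that the shifted alphabets can be taken pairwise disjoint. No gaps.
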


\subsection{Examples}\label{sec:LTimpADF}
As we mentioned, for the logics
$\LK{},~\logicts{T},~\LK{4},~\LS{4},~\LS{5}$, as well as for many others, strict filtrations are well-known, see e.g., \cite[Chapter 5]{Ch:Za:ML:1997}.
 In fact, there is a continuum of modal logics that admit strict filtration.
In \cite{bezhanishvili2016stable}, a family of modal logics called {\em stable} was introduced.
Logics $\logicts{T}$ or $\LK{}+\{\Diamond\top\}$ are examples of stable logics.
Every stable logic admits strict filtration, which follows
from \cite[Theorem 7.8]{bezhanishvili2016stable}, and  there are
continuum many stable logics \cite[Theorem 6.7]{bezhanishvili2018stable}.

\begin{remark}
Stable logics were also used to construct decidable extensions of $\PDL$.
Namely, in \cite{IlinAiML2016}, it was announced that
extensions of $\PDL$ with axioms of stable logics have the finite model property.\later{DC with Nick and Ilin}
\end{remark}
\noindent
Another class of logics
 that admit strict filtration
 are logics given by canonical {\em MFP-modal formulas} introduced in \cite[Section 4.2]{KikotShapZolAiml2020}.

\bigskip

There are logics that do not admit strict filtration, but admit definable filtrations.
Consider the family of logics $\LK{} + \{ \Diamond^m p \to \Diamond p\}$ for $m \geq 3$.
These logics are Kripke complete, and their frames are characterized by the  conditions
\def\Rmin{R_{\sim_\Delta}}
\begin{equation}\label{eq:pretrM}
\forall x \, \AA  y \: (x R^m y \Rightarrow x R y);
\end{equation}
moreover, all these logics admit definable filtration \cite[Theorem 8]{Gabbay:1972:JPL}:
for a given $\Gamma$ and a model, the required filtration can be built by letting
$\Delta= \{\Diamond^{i} \vf \mid \vf \in\Gamma\;\&\; i \leq m - 2 \}$.
However, these logics do not admit strict filtration.
We will illustrate it with the case when $m = 3$, one can generalize it for any $m \geq 3$.\hide{
(to define the relation,  let $S$ be the minimal filtered relation
$\Rmin$, and put
put $\ff{R}=\bigcup_{0<k<\omega}S^{1+(m-1)k}$)}
\begin{example}
$L = {\bf K} + \{\Diamond \Diamond \Diamond p \to \Diamond p\}$ does not admit strict filtration.
\end{example}

\begin{proof}
  Consider a five-element model $M = (W, R, \vartheta)$, where the binary relation is defined by the following figure
\medskip

    \xymatrix@R-0.6cm{
  &&&& x \ar[r] & y \\
  &&&&& y' \ar[r] & z \ar[r] & u
  }
\medskip
\noindent
($R$ is assumed to be irreflexive),
and 
$$
\vartheta(p) = \{ x \}, \quad \vartheta(q) = \{ y, y' \}, \quad \vartheta(r) = \{ u \}.
$$
By \eqref{eq:pretrM}, the frame of $M$ validates  $\Diamond \Diamond \Diamond p \to \Diamond p$,
and so $M$ is a model of the logic $L$.
Let $\Gamma = \{p, q , r, \Diamond r \}$. Assume
that $\ff{M} = (W{/}{\sim_\Gamma}, \ff{R}, \ff{\vartheta})$ is a $\Gamma$-filtration of $M$ and show that
$\ff{M}$ is not an $L$-model.
Notice  that $y$ and $y'$ are $\sim_\Gamma$-equivalent, and hence the quotient $W{/}{\sim_\Gamma}$
consists of four elements $[x],[y](=[y']),[z],[u]$.
Since $\ff{R}$ contains the minimal filtered relation, we have $[x]\ff{R}[y]\ff{R}[z] \ff{R} [u]$. For the sake of contradiction, assume that $\ff{M}\mo L$.
We have $\ff{M},[u] \mo r$, and so $\ff{M},[x] \mo \Diamond\Diamond\Diamond r$.
Then $\ff{M},[x]\mo \Diamond r$ by assumption.
Since $\Diamond r\in\Gamma$ and $\ff{M}$ is a $\Gamma$-filtration of $M$,
we have $M,x\mo \Diamond r$, which contradicts the definition of $M$. Hence $\ff{M}$ is not an $L$-model.
\end{proof}

A continuum of logics that admit definable filtration are locally tabular logics.
Recall that a logic $\vL$ is {\em locally tabular},
if, for every finite $k$, $\vL$ contains only a finite number of pairwise nonequivalent
formulas in a given $k$ variables.
Well-known examples of locally tabular modal logics are $\LK{5}$ \cite{nagle_thomason_1985} and so its extensions (e.g., $\LK{45}$, $\LS{5}$),
or the {\em difference logic} $\LK{}+\{p\imp \Box\Diamond p,~\Diamond\Diamond p\imp \Diamond p\vee p\}$ \cite{esakia2001weak}.
\later{DC ref for Esakia;
see
Simple weakly transitive modal algebras
https://go.gale.com/ps/i.do?id=GALE
}


Let $M=(W,(R_a)_{a\in\Al},\theta)$ be a model of a locally tabular logic $L$,
$\Gamma\subset \Fm(\Al)$ a finite $\Sub$-closed set of formulas. 
Let $V$ be the set of all variables occurring in $\Gamma$,
and let $\Delta$ be the set of all $\Al$-formulas with variables in $V$. 
Let $\fkCan{V}$ be the canonical frame of $L$ built from
maximal $\vL$-consistent subsets of $\Delta$; the canonical relations are defined in the standard way.
Consider the maximal $\Delta$-filtration $\ff{M}$ of $M$  with the carrier $W/{\sim_\Delta}$;
in \cite{Shehtman:AiML:2014},
such filtrations are called {\em canonical}.
Since $L$ is locally tabular, $\ff{M}$ is finite. The frame $\ff{F}$ of $\ff{M}$
is isomorphic to a generated subframe
of $\fkCan{V}$, see, e.g.,
\cite{Shehtman:AiML:2014} for details.\later{DC}
Since $L$ is locally tabular, $\fkCan{V}$ is finite, and so
$\fkCan{V}\mo L$. It follows that $\ff{M}\mo L$, as required.\later{more details} Hence, we have
\obsolete{
$\ff{M}$ is a p-morphic image of $M$ (for details, see \cite[Proposition 2.32]{Shehtman:AiML:2014}).
\ISH{This ref is not exact. The argument must be that the frame of M is a p-morphic image of a generated subframe of the k-canonical frame}
}
\begin{theorem}[Corollary from \cite{Shehtman:AiML:2014}]\label{thm:LFimpliesADF}
If $L$ is locally tabular, then $L$ admits definable filtration.
\end{theorem}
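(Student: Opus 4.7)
The plan is to take the maximal filtration through a sufficiently rich finite set of formulas, and to show it is isomorphic to a generated subframe of a finite canonical frame of $L$. Given an $L$-model $M=(W,(R_\Di)_{\Di\in\Al},\val)$ and a finite $\Sub$-closed set $\Gamma$, let $V$ be the finite set of variables occurring in $\Gamma$, and let $\Delta$ be the set of all $\Al$-formulas in variables $V$. By local tabularity, $\Delta$ has only finitely many equivalence classes modulo $L$; a finite set of representatives containing $\Gamma$ induces the same equivalence $\sim_\Delta$ on $W$, which therefore has finite index and is definable by a finite set of formulas. Form $\ff{M}$ with carrier $W/{\sim_\Delta}$, the inherited valuation, and maximal filtered relations $\ff{R}_\Di=(R_\Di)_{\sim_\Delta}^\Delta$. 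This is automatically a finite definable $\Gamma$-filtration of $M$ because $\Gamma\subseteq\Delta$.

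The key step, and the main obstacle, is verifying $\ff{M}\mo L$. The standard device is to send each class $[x]\in W/{\sim_\Delta}$ to the maximal $L$-consistent set $\{\psi\in\Delta\mid M,x\mo\psi\}$, yielding an injection $\iota$ from the frame of $\ff{M}$ into the canonical frame $\fkCan{V}$ of $L$ built from maximal $L$-consistent subsets of $\Delta$. One checks that $\iota$ is a frame embedding onto a generated subframe: the forward direction of the relation correspondence is immediate from the definition of the maximal filtered relation, since $[x](R_\Di)_{\sim_\Delta}^\Delta[y]$ encodes exactly the inclusion of $\Di$-preimages that defines the canonical relation; the reverse direction uses the classical saturation argument for canonical points, now applied inside the finite Lindenbaum algebra on $V$.

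Since $L$ is locally tabular, $\fkCan{V}$ is finite, and as a canonical frame of $L$ it validates $L$; validity transfers to the generated subframe and hence to the frame of $\ff{M}$, so $\ff{M}\mo L$. The three filtration conditions then hold by construction. Overall, local tabularity supplies everything the method needs: finiteness of the quotient $W/{\sim_\Delta}$, definability of $\sim_\Delta$, and finiteness of the canonical frame in which the filtration embeds; the only nontrivial verification is the identification of the maximal filtered relation with the canonical relation via $\iota$, which is where I would spend the bulk of the argument.
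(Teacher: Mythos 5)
Your proposal is correct and follows essentially the same route as the paper: take the maximal (``canonical'') filtration through the set $\Delta$ of all formulas in the variables of $\Gamma$, identify its frame with a generated subframe of the finite canonical frame $\fkCan{V}$, and transfer validity of $L$ from there. You additionally spell out two details the paper delegates to the reference \cite{Shehtman:AiML:2014} --- that $\sim_\Delta$ is induced by a finite set of representatives (so the filtration is definable) and the saturation argument showing the image is a generated subframe --- both of which are handled correctly.
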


\hide{
Note that if a logic is locally tabular, it does not have to admit strict filtration, the example is the logic $~\LK{5}$, which is locally finite, but it does not admit strict filtration though.
}

Putting the above examples together, we obtain the following instance of Corollary \ref{cor:main}.
\begin{corollary}\label{cor:final}
Let $\Al$ be a finite set,  $L_1, \ldots, L_n$ be logics
such that $L_1 * \ldots * L_n\subseteq \Fm(\Al)$.
If each $L_i$
is
\begin{itemize}
\item
one of the logics
$$\LK{},~\logicts{T},~\LK{4},~\LS{4},
~
\LK{} + \{\Diamond^m p \to \Diamond p\}~(m \geq 1),$$ or an extension of any of these logics with a variable-free formula,
\item  locally tabular (e.g., $\LK{5},~\LK{45},~\LS{5}$, the difference logic), or
\item  a stable logic, or
\item axiomatizable by canonical MFP-modal formulas,
\end{itemize}
then $\CPDL(\At)+L_1 * \ldots * L_n$ has the finite model property.
If also all $L_i$ are finitely axiomatizable, then $\CPDL(\At)+L_1 * \ldots * L_n$ is decidable.
\end{corollary}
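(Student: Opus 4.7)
The proof is a direct corollary of Corollary \ref{cor:main}: by that result, it suffices to verify that each logic $L_i$ appearing in the four listed items admits definable filtration. The plan is therefore to walk through the four cases, appealing to results established earlier in the paper or in the cited literature.

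For the first bullet, I would note that $\LK{}, \logicts{T}, \LS{4}, \LK{}+\{p\imp \Box\Di p\}, \LK{}+\{\Di\top\}$, and $\LK{4}+\{\Di\top\}$ all admit \emph{strict} filtration by standard constructions (see, e.g., \cite{Ch:Za:ML:1997}); strict filtrations are, in particular, definable (with $\Delta = \Gamma$). The logics $\LK{}+\{\Di^m p \to \Di p\}$ for $m \geq 1$ do not in general admit strict filtration (as illustrated by the example with $m = 3$ given just before Theorem \ref{thm:LFimpliesADF}), but Gabbay's construction \cite[Theorem~8]{Gabbay:1972:JPL} shows that for any $\Sub$-closed finite $\Gamma$, the filtration obtained by quotienting through $\sim_{\Delta}$ with $\Delta = \Gamma \cup \{\Di^m \vf \mid \vf \in \Gamma\}$ is a definable $\Gamma$-filtration lying in $\Models(L)$.

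For the second bullet, Theorem \ref{thm:LFimpliesADF} already furnishes definable filtration for every locally tabular logic, so each of $\LK{5}, \LK{45}, \LS{5}$ and the difference logic is covered (their local tabularity being the content of \cite{nagle_thomason_1985} and \cite{esakia2001weak}). For the third bullet, every stable logic admits strict—and thus definable—filtration by \cite[Theorem~7.8]{bezhanishvili2016stable}. For the fourth bullet, logics axiomatized by canonical MFP-modal formulas admit definable filtration by \cite[Section~4.2 and Corollary~4.13]{KikotShapZolAiml2020}.

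With these four cases in hand, each $L_i$ admits definable filtration, so Corollary \ref{cor:main} applies and yields that $\CPDL(\At)+L_1 * \ldots * L_n$ has the finite model property; if moreover each $L_i$ is finitely axiomatizable then the same corollary gives decidability. There is no substantial obstacle here: the entire argument is a bookkeeping exercise that packages the transfer result (Theorem \ref{thm:mainTransferNew}) together with the menagerie of classes of modal logics already known to admit definable filtration. The only point where minor care is needed is to make sure that in the ``mixed'' assumption (some $L_i$ strict, some only definable) the stronger property is not silently required; but this is automatic, since strict filtration implies definable filtration and Corollary \ref{cor:main} is formulated directly for the latter.
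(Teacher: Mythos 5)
Your proposal is correct and follows exactly the route the paper intends: Corollary \ref{cor:final} is obtained by checking that every logic in the four listed classes admits definable filtration (via strict filtration, Gabbay's construction for $\LK{}+\{\Di^m p\to\Di p\}$, Theorem \ref{thm:LFimpliesADF} for locally tabular logics, and the cited results for stable and MFP-axiomatizable logics) and then invoking Corollary \ref{cor:main}. No discrepancies with the paper's argument.
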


\obsolete{
\begin{corollary}
Let $n$ and $\At$ be finite, $L_1, \ldots, L_n, L$ be modal logics
such that $L_1 * \ldots * L_n *L \subseteq \Fm(\At)$.
\begin{enumerate}
\item
If $L_1, \ldots, L_n, L$ admit definable filtration, then $\CPDL(\At)+L_1 * \ldots * L_n*L$ has the finite model property.
\item If each $L_i$
is one of the logics
$\LK{},~\logicts{T},~\LK{4},~\LK{5},~\LK{45},~\LS{4},~\LS{5},~\LK{}+\{p\imp \Box\Diamond p\},~\LK{}+\{\Di\top\},
\LK{4}+\{\Di\top\}$,
$\LK{} + \{\Diamond^m p \to \Diamond p\}$ for $m \geq 1$, or $L_i$or any stable logic
and $L$ admits definable filtration,
then  $\CPDL(\At)+L_1 * \ldots * L_n*L$ has the finite model property.
If also $L$ and each $L_i$ are finitely axiomatizable, then $\CPDL(\At)+L_1 * \ldots * L_n*L$ is decidable.
\end{enumerate}
\end{corollary}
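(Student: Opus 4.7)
The plan is to recognize that Corollary~\ref{cor:final} is essentially an assembly corollary: once one checks that every logic in the listed families admits definable filtration, the statement follows immediately from Corollary~\ref{cor:main}. So the work is to go through the bulleted list item-by-item and verify the admits-definable-filtration hypothesis for each family, then invoke Corollary~\ref{cor:main} uniformly.

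First I would handle the classical logics $\LK{},~\logicts{T},~\LS{4},~\LK{}+\{p\imp \Box\Di p\},~\LK{}+\{\Di\top\},~\LK{4}+\{\Di\top\}$ by citing the standard textbook result (e.g., \cite{Ch:Za:ML:1997}) that each of them admits \emph{strict} filtration; since every strict filtration is in particular a definable one (take $\Delta=\Gamma$), these logics admit definable filtration. For the family $\LK{}+\{\Diamond^m p \to \Diamond p\}$ with $m\ge 1$, I would point out that $m=1$ is just $\logicts{T}$ (already covered), while for $m\ge 2$ the discussion in Section~\ref{sec:LTimpADF} already establishes definable filtration using $\Delta=\{\Di^m\vf \mid \vf\in\Gamma\}$, following Gabbay~\cite{Gabbay:1972:JPL}; the example worked out in the text even shows this refinement of $\sim_\Gamma$ is necessary. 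For locally tabular logics (with the listed examples $\LK{5},\LK{45},\LS{5}$, and the difference logic, whose local tabularity is referenced in the text), I invoke Theorem~\ref{thm:LFimpliesADF} directly. For stable logics, definable filtration follows from \cite[Theorem 7.8]{bezhanishvili2016stable}, which in fact yields strict filtration. Finally, for logics axiomatizable by canonical MFP-modal formulas, the discussion in Section~\ref{sec:LTimpADF} (citing \cite[Section 4.2]{KikotShapZolAiml2020}) gives strict, hence definable, filtration.

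Having verified that each $L_i$ admits definable filtration, the conclusion is a direct application of Corollary~\ref{cor:main}: $\CPDL(\At)+L_1*\ldots*L_n$ has the finite model property, and the decidability clause follows from the same corollary under the additional hypothesis that all $L_i$ are finitely axiomatizable (which in turn makes $L_1*\ldots*L_n$ finitely axiomatizable, and hence $\CPDL(\At)+L_1*\ldots*L_n$ is recursively axiomatizable and has the f.m.p., so is decidable).

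There is no real mathematical obstacle here since all the heavy lifting was done in Theorem~\ref{thm:mainTransferNew} (fusion preserves admits-definable-filtration) and Theorem~\ref{thm:ADFforCPDL} (transfer to $\CPDL$), which together yield Corollary~\ref{cor:main}. The only minor care point is bookkeeping: one must make sure the alphabets of the $L_i$ are disjoint so that ``fusion'' is well-defined in the sense of Section~\ref{sec:main}, which is why the hypothesis $L_1*\ldots*L_n\subseteq \Fm(\Al)$ is stated; and one should note that for the $m=1$ case of $\Di^m p\to \Di p$ the formula collapses to the $\logicts{T}$ axiom, so no separate argument is needed there.
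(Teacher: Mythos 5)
Your proposal is correct and follows the paper's own route exactly: the paper likewise treats this statement as a direct assembly of Corollary \ref{cor:main} (itself Theorem \ref{thm:ADFforCPDL} plus Theorem \ref{thm:mainTransferNew}) with the case-by-case verification that each listed family admits definable filtration carried out in Section \ref{sec:LTimpADF}, the extra logic $L$ being fed into the fusion on the strength of its assumed definable filtration alone. One minor slip worth fixing: for $m=1$ the axiom $\Diamond^1 p\to\Diamond p$ is a tautology, so that instance is $\LK{}$ rather than $\logicts{T}$ (the $\logicts{T}$ axiom $p\to\Diamond p$ would be the $m=0$ case); since $\LK{}$ also admits strict filtration, nothing in the argument breaks.
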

}

\section{Acknowledgement}

The authors wishes to thank Nick Bezhanishvili for valuable discussions.
The authors are also grateful to the referees for their comments on an earlier version of this paper.

\bibliographystyle{alpha}
\bibliography{filtration}
\end{document}